\newcommand{\IN}{\mathbb N}
\newcommand{\IC}{\mathbb C}
\newcommand{\IH}{\mathbb H}
\newcommand{\la}{\langle}
\newcommand{\ra}{\rangle}
\newcommand{\w}{\omega}
\newcommand{\e}{\varepsilon}
\newtheorem{theorem}{Theorem}
\newtheorem{lemma}{Lemma}
\newtheorem{claim}{Claim}
\title[On 2-groups with a unique 2-element subgroup]{The structure of infinite 2-groups\\ with a unique 2-element subgroup}
\author{Taras Banakh}
\address{Department of Mathematics, Ivan Franko National University of Lviv, Ukraine, and\newline
Instytut Matematyki, Uniwersytet Humanistyczno-Przyrodniczy jana Kochanowskiego, Kielce, Poland}
\keywords{2-group, quasicyclic 2-group, the group of generalized quaternions}
\subjclass{20E34}
\email{tbanakh@gmail.com}
\begin{document}
\begin{abstract}
We prove that each infinite 2-group $G$ with a unique 2-element subgroup is isomorphic either to the quasicyclic 2-group $C_{2^\infty}$ or to the infinite group of generalized quaternions $Q_{2^\infty}$. The latter group is generated by the set $C_{2^\infty}\cup Q_8$ in the algebra of quaternions $\IH$.
\end{abstract}
\maketitle

In this paper we describe the structure of 2-groups that contain a unique 2-element subgroup. For finite groups this was done in \cite[5.3.6]{Rob}: Each finite 2-group with a unique 2-element subgroup is either cyclic or is a group of generalized quaternions.

Let us recall that a group $G$ is called a {\em 2-group} if each element $x\in G$ has order $2^k$ for some $k\in\IN$. The {\em order} of an element $x$ is the smallest number $n\in\IN$ such that $x^n=1$ where $1$ denotes the neutral element of the group. By $\w$ we denote the set of non-negative integer numbers.
 
For $n\in\w$ denote by
$$C_{2^n}=\{z\in \IC:z^{2^n}=1\}$$ the cyclic group of order $2^n$. The union $$C_{2^\infty}=\bigcup_{n\in\IN}C_{2^n}\subset\IC$$is called the {\em quasicyclic 2-group}.

The {\em group of quaternions} is the 8-element subgroup
$$Q_8=\{1,-1,i,-i,j,-j,k,-k\}$$in the  algebra of quaternions $\IH$ (endowed with the operation of multiplication of quaternions). The real algebra $\IH$ contains the field of complex numbers $\IC$ as a subalgebra.

For $n\in\IN$ the subgroup $Q_{2^n}$ of $\IH$  generated by the set $C_{2^{n-1}}\cup Q_8$ is called the {\em group of generalized quaternions}.
For $n\ge 3$ this group has a presentatiom
$$\la x,y\mid x^4=1,\;x^2=y^{2^{n-2}},\;xyx^{-1}=y^{-1}\ra.$$

The union $$Q_{2^\infty}=\bigcup_{n\in\IN}Q_{2^n}$$ will be called the {\em infinite group of generalized quaternions}. The quasicyclic group $C_{2^{\infty}}$ has index 2 in $Q_{2^\infty}$ and each element $x\in Q_{2^\infty}\setminus C_{2^\infty}$ has order 4.

The main result of this paper is the following extension of Theorem~5.3.6 \cite{Rob}. It will be essentially used in \cite{BG} for describing of the structure of minimal left ideals of the superextensions of twinic groups.

\begin{theorem}\label{t1} Each 2-group with a unique 2-element subgroup is isomorphic to $C_{2^n}$ or $Q_{2^n}$ for some $n\in\IN\cup\{\infty\}$.
\end{theorem}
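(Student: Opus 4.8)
My plan is to separate the routine structural reductions from the single hard analytic point, which is local finiteness.

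First I would record the easy reductions. The unique involution $z$ is central, since for any $g$ the element $gzg^{-1}$ is again an involution and hence equals $z$. Consequently every nontrivial finite subgroup of $G$ contains $z$ and, having a unique involution, is cyclic or generalized quaternion by the quoted finite case (5.3.6 of \cite{Rob}). Moreover every abelian subgroup $A$ of $G$ is a $2$-group whose socle $\{x\in A:x^2=1\}$ equals $\{1,z\}$ and so has order $2$; hence $A$ is cyclic or quasicyclic (the standard description of abelian $2$-groups with socle of order $2$, via embedding into $C_{2^\infty}$). In particular, if $G$ is abelian we are already done: $G\cong C_{2^n}$ or $G\cong C_{2^\infty}$. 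So I may assume $G$ is non-abelian, and since the finite case is known, infinite.

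The heart of the matter is to prove that $G$ is locally finite. The mechanism I would use is a dihedral trick: if $a,b\in G$ satisfy $a^2,b^2\in\la z\ra$, then their images in $\bar G:=G/\la z\ra$ are involutions, so $\la \bar a,\bar b\ra$ is dihedral; it cannot be infinite dihedral, for then $ab$ would have infinite order in the torsion group $G$; hence $\la a,b\ra$ is finite. Thus any two elements of order $\le 4$ generate a finite subgroup. The main obstacle is to upgrade this to all finitely generated subgroups. I would try to do so by passing to the central quotient $\bar G$ and exploiting that every finite subgroup of $\bar G$ is the image of a finite cyclic or quaternion subgroup of $G$, hence is cyclic or dihedral; the genuine difficulty is that $\bar G$ need not have a unique (or central) involution, so one cannot simply iterate the quotient, and the induction must be reorganized for the wider class of $2$-groups whose finite subgroups are cyclic or dihedral. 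Alternatively, since the reductions already show that every abelian subgroup of $G$ satisfies the minimal condition, one may invoke Shunkov-type results guaranteeing that a periodic group with minimal condition on abelian subgroups is Chernikov, hence locally finite. I expect essentially all the work of the proof to lie in this step.

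Granting local finiteness, $G$ is a directed union of finite subgroups $G_i$, each cyclic or generalized quaternion. If every $G_i$ is cyclic, the union is locally cyclic, hence abelian, and we are back in the solved abelian case. Otherwise some $G_i$ is non-abelian, so $G_i\cong Q_{2^{m_i}}$ with $m_i\ge 3$; any larger $G_j$ is then non-cyclic with a unique involution, hence $G_j\cong Q_{2^{m_j}}$ with $m_j\ge m_i$. In each $Q_{2^{m_i}}$ the index-$2$ cyclic subgroup $C_{2^{m_i-1}}$ is characteristic, so these subgroups form an increasing chain whose union $A$ is quasicyclic, equal to $C_{2^\infty}$ and of index $2$ in $G$; every element of $G\setminus A$ lies in some $G_j\setminus C_{2^{m_j-1}}$, hence has order $4$ and inverts $A$. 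These data match the union $\bigcup_i Q_{2^{m_i}}$ exactly, so $G\cong Q_{2^\infty}$, completing the classification.
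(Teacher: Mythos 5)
Your reductions (centrality of the involution, classification of the finite and abelian subgroups, and the final assembly of a directed union of generalized quaternion groups into $Q_{2^\infty}$) are all sound and match what the paper does in Lemmas~\ref{l1}, \ref{l2} and, in a different packaging, Lemmas~\ref{l7}--\ref{l12}. But you have correctly located the crux --- some form of local finiteness --- and then not proved it, and neither of your two proposed routes closes the gap. The dihedral trick only shows that two elements $a,b$ with $a^2,b^2\in\la z\ra$ generate a finite subgroup; pairwise finiteness of such subgroups does not yield local finiteness, you give no mechanism for handling generators of order $\ge 8$, and you explicitly concede that the induction through the central quotient ``must be reorganized'' without saying how. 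The alternative appeal to ``a periodic group with minimal condition on abelian subgroups is Chernikov'' is circular as stated: that implication is a theorem about \emph{locally finite} groups (and is false for arbitrary periodic groups, e.g.\ Tarski monsters satisfy the minimal condition), so it cannot be used to establish local finiteness. The paper does acknowledge that the result follows from a 1970 theorem of Shunkov, so citing such a result is a legitimate strategy in principle, but it would require the precise statement and reference, not the version you invoke.

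What the paper does instead is worth noting, because it avoids proving local finiteness of $G$ as an intermediate step altogether. Its Lemma~\ref{l3} is a targeted finiteness criterion: $\la a,b\ra$ is finite whenever $b^2\in\la a\ra$, $a^2b\in b\cdot\la a\ra$, and $a$, $ab$ have finite order; the proof is a direct normal-form computation showing every word in $a,b$ lies in the finite set $\{(ab)^ia^j,\ b(ab)^ia^j\}$. This strictly generalizes your dihedral trick (which is the case $a^2\in\la z\ra$) to generators $a$ of arbitrary order, and the extra hypothesis $a^2b\in b\cdot\la a\ra$ is exactly what gets verified inductively in Lemma~\ref{l6} (induction on the order of $x$) to prove that every element of order $4$ outside $\la x\ra$ inverts $x$. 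From there the paper extracts a maximal abelian subgroup of index $2$ and builds the isomorphism with $Q_{2^\infty}$ directly. To repair your argument you would need either this kind of two-generator finiteness lemma for generators of large order, or an honest proof (or precise citation) of local finiteness; as written, the central step is missing.
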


As we already know, for finite groups this theorem was proved in \cite[5.3.6]{Rob}. Let us write this fact as a lemma for the future reference: 

\begin{lemma}\label{l1} Each finite 2-group with a unique 2-element subgroup is isomorphic to $C_{2^n}$ or $Q_{2^n}$ for some $n\in\IN$.
\end{lemma}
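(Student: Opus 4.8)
To prove Lemma~\ref{l1} I would argue by induction on the order of $G$, the case $|G|\le 2$ being trivial. The unique subgroup of order $2$ is generated by an involution $z$, and $z$ lies in every nontrivial subgroup of $G$, since every nontrivial finite 2-group contains an involution and $z$ is the only one. Hence every subgroup of $G$ again has a unique 2-element subgroup, which is what makes the induction run.

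First I would settle the abelian case. If $G$ is abelian, the fundamental theorem of finite abelian groups writes $G$ as a direct product of cyclic 2-groups, and the subgroup of elements of order dividing $2$ is elementary abelian of rank equal to the number of factors. Since this subgroup is exactly $\la z\ra$, there is a single factor, so $G$ is cyclic.

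For the inductive step assume $G$ is non-abelian and choose a maximal subgroup $M$; it has index $2$ and is normal, and by the induction hypothesis $M$ is isomorphic to $C_{2^{n-1}}$ or to a generalized quaternion group. If $M$ is cyclic, then $G$ possesses a cyclic subgroup of index $2$, and I would invoke the classical classification of finite 2-groups with a cyclic maximal subgroup: such a group is one of $C_{2^n}$, $C_{2^{n-1}}\times C_2$, or the dihedral, semidihedral, modular, or generalized quaternion group of order $2^n$. A direct inspection shows that among these only $C_{2^n}$ and $Q_{2^n}$ have a single involution, so $G\cong Q_{2^n}$ in this case.

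The remaining, and hardest, case is when $M$ is a generalized quaternion group $Q_{2^{n-1}}$, where I must show that $G$ is again generalized quaternion. Let $\la a\ra$ be a maximal cyclic subgroup of $M$, of order $2^{n-2}$; for large $n$ it is characteristic in $M$ and hence normal in $G$ (the few small cases are checked by hand). The plan is to produce inside $G$ a cyclic subgroup of index $2$, which reduces this case to the previous one. For this I would examine the centralizer $C_G(\la a\ra)$: inside $M$ it equals $\la a\ra$, so $C_G(\la a\ra)$ meets $M$ in $\la a\ra$ and has order at most $2\,|\la a\ra|$. If some $g\in G\setminus M$ centralizes $\la a\ra$, then $\la a,g\ra$ is abelian with a unique involution, hence cyclic of order $2^{n-1}$, the desired index-$2$ cyclic subgroup. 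The genuinely delicate point is to rule out the alternative, in which every element outside $M$ moves $\la a\ra$: here one uses the uniqueness of $z$ to force such an element to have order $4$ and square to $z$, and then matches the generators of $G$ with those in the presentation $\la x,y\mid x^4=1,\ x^2=y^{2^{n-2}},\ xyx^{-1}=y^{-1}\ra$ of $Q_{2^n}$. This involution bookkeeping, which separates the generalized quaternion group from the dihedral, semidihedral and modular groups sharing the same cyclic maximal subgroup, is the main obstacle of the proof.
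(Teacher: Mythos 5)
First, a point of reference: the paper does not actually prove this lemma --- it is quoted from \cite[5.3.6]{Rob} "for future reference" --- so your proposal is being measured against the standard textbook argument rather than against anything in the text. Your overall architecture is sound and close to that standard treatment: induction on $|G|$, the observation that the unique involution is inherited by every nontrivial subgroup, the abelian case via the structure theorem, passage to a maximal (hence normal, index-2) subgroup $M$, and the appeal to the classification of $2$-groups with a cyclic maximal subgroup when $M$ is cyclic. All of that is correct.

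The proposal nevertheless has a genuine gap exactly where you flag ``the main obstacle'': the case $M\cong Q_{2^{n-1}}$ in which no element of $G\setminus M$ centralizes the cyclic maximal subgroup $\la a\ra\le M$ of order $2^m$, $m=n-2$. You defer this step, and the sketch you give of it points in the wrong direction. In that configuration $C_G(\la a\ra)=\la a\ra$, so $G/\la a\ra$ embeds in $\mathrm{Aut}(C_{2^m})\cong C_2\times C_{2^{m-2}}$ as a subgroup of order $4$ containing inversion; since inversion is not a square in that automorphism group, the image is elementary abelian, and the elements of $G\setminus M$ act on $a$ by $x\mapsto x^{2^{m-1}\pm 1}$ --- \emph{not} by inversion, which is already realized by elements of $M$. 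Hence there is no way to ``match the generators of $G$ with the presentation $xyx^{-1}=y^{-1}$'' in this branch; instead one must show the configuration is impossible. It is: choose $c\in G\setminus M$ with $cac^{-1}=a^{2^{m-1}-1}$; then $c^2$ lies in $\la a\ra$ and is fixed by conjugation by $c$, which forces $c^2\in\{1,z\}$, and a short computation shows that either $c$ (if $c^2=1$) or $ca$ (if $c^2=z$, since $(ca)^2=z\,a^{2^{m-1}}=1$) is a second involution --- a contradiction. Only this contradiction returns you to the branch where some $g\in G\setminus M$ centralizes $\la a\ra$, so that $\la a,g\ra$ is abelian with a unique involution, hence cyclic of index $2$, and the cyclic-maximal-subgroup classification finishes the proof. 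Without this computation (or an equivalent one) the argument is incomplete, because this is precisely the step that separates $Q_{2^n}$ from the dihedral, semidihedral and modular groups sharing the same cyclic maximal subgroup.
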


So, it remains to prove Theorem~\ref{t1} for infinite groups.  The abelian case is easy:

\begin{lemma}\label{l2} Each infinite abelian 2-group $G$ with a unique 2-element subgroup is isomorphic to the quasicyclic 2-group $C_{2^\infty}$.
\end{lemma}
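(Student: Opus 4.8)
The plan is to exhaust $G$ by its finite ``layers'' and to show that each layer is cyclic; once that is done, $G$ is visibly an increasing union of cyclic $2$-groups of unbounded order, which is exactly the quasicyclic group $C_{2^\infty}$. First I would translate the hypothesis: a subgroup of order $2$ is generated by its unique involution, so having a unique $2$-element subgroup is the same as having a unique element $a\in G$ of order $2$. Since $G$ is abelian, for each $n\in\w$ the set $G[2^n]:=\{x\in G:x^{2^n}=1\}$ is a subgroup, these subgroups increase with $n$, and $G=\bigcup_{n\in\w}G[2^n]$ because $G$ is a $2$-group.

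The key step, and the only place where the hypothesis really bites, is the assertion that each $G[2^n]$ is cyclic (hence, being torsion, finite of order dividing $2^n$). I would prove this by induction on $n$, the cases $n=0,1$ being trivial since $G[1]=\{1\}$ and $G[2]=\{1,a\}$. For the inductive step, set $A:=G[2^n]$ and consider the squaring homomorphism $x\mapsto x^2$ on $A$: it lands in $G[2^{n-1}]$, and its kernel is exactly $\{1,a\}$, so $A/\{1,a\}$ is isomorphic to a subgroup of the cyclic group $G[2^{n-1}]$ and is therefore cyclic. Writing $A=\langle a,b\rangle$ for a lift $b$ of a generator of $A/\langle a\rangle$, either $a\in\langle b\rangle$, in which case $A=\langle b\rangle$ is cyclic, or $\langle a\rangle\cap\langle b\rangle=\{1\}$ and $A\cong\langle b\rangle\times\langle a\rangle$; but in the latter case, as soon as $b\neq1$ the subgroup $\langle b\rangle$ has its own involution and $A$ would contain three distinct elements of order $2$, contradicting the uniqueness of $a$. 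Hence $A$ is cyclic. I expect this inductive step to be the main obstacle: the product-versus-cyclic dichotomy together with the involution count is the precise mechanism by which the unique-involution hypothesis forces rigidity, and it is where care is needed.

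To finish, I would observe that since $G$ is infinite and equals $\bigcup_n G[2^n]$ with each $G[2^n]$ finite cyclic, the orders $|G[2^n]|$ must be unbounded (otherwise $G=G[2^N]$ would be finite). Consequently, for every $j\in\w$ the group $G$ has a \emph{unique} subgroup $K_j$ of order $2^j$: choosing $n$ with $|G[2^n]|\ge 2^j$, cyclicity of $G[2^n]$ gives existence and uniqueness of the order-$2^j$ subgroup inside it, and any order-$2^j$ subgroup of $G$ lies in some $G[2^n]$ and thus coincides with it. These $K_j$ form an increasing chain with $G=\bigcup_{j}K_j$. I would then construct an isomorphism $G\to C_{2^\infty}=\bigcup_j C_{2^j}$ by choosing compatible generators $g_j$ of $K_j$ with $g_{j+1}^2=g_j$ (a square root in $K_{j+1}$ of a generator of $K_j$ again generates $K_{j+1}$) and sending each $g_j$ to a primitive $2^j$-th root of unity, the roots chosen so that the square of the $(j{+}1)$-st is the $j$-th. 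This is the standard identification of a strictly increasing union of finite cyclic $2$-groups with $C_{2^\infty}$, which completes the proof.
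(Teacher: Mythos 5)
Your proof is correct, but it takes a genuinely different route from the paper. The paper's argument is a two-line application of Baer's theorem: since $C_{2^\infty}$ is divisible, hence injective, the isomorphism of the unique $2$-element subgroup $Z$ onto $C_2\subset C_{2^\infty}$ extends to a homomorphism $\bar f\colon G\to C_{2^\infty}$; the kernel is a $2$-group containing no involution, hence trivial, and the image is an infinite subgroup of $C_{2^\infty}$, hence everything. You instead avoid injectivity altogether and work from the bottom up: you stratify $G$ by the layers $G[2^n]$, prove each layer cyclic by induction (the squaring map has kernel $\{1,a\}$ and cyclic image, and the split case $\langle b\rangle\times\langle a\rangle$ is killed by counting involutions), and then assemble the isomorphism from compatible generators of the unique subgroups $K_j$ of order $2^j$. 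All steps check out: the dichotomy $a\in\langle b\rangle$ versus $\langle a\rangle\cap\langle b\rangle=\{1\}$ is exhaustive because $\langle a\rangle$ has order $2$, and the unboundedness of $|G[2^n]|$ plays the role of the paper's observation that every infinite subgroup of $C_{2^\infty}$ is all of $C_{2^\infty}$. What each approach buys: the paper's is far shorter given the cited machinery, while yours is elementary and self-contained, and it additionally makes explicit that $G$ has a unique subgroup of each order $2^j$ --- a fact very much in the spirit of the uniqueness arguments used elsewhere in the paper (Lemmas~\ref{l5} and \ref{l10}).
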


\begin{proof} Let $Z$ be the unique 2-element subgroup of $G$ and $f:Z\to C_2$ be an isomorphism. Since the group $C_{2^\infty}$ is injective, by Baer's Theorem \cite[4.1.2]{Rob}, the homomorphism $f:Z\to C_2\subset C_{2^\infty}$ extends to a homomorphism $\bar f:G\to C_{2^\infty}$. We claim that $\bar f$ is an isomorphism. Indeed, the kernel $\bar f^{-1}(1)$ of $\bar f$ is trivial since it is a 2-group and contains no element of order 2. So, $\bar f$ is inejective and then $\bar f(G)$ concides with $C_{2^\infty}$, being an infinite subgroup of $C_{2^\infty}$.
\end{proof}

The non-abelian case is a bit more difficult. For two elements $a,b$ of a group $G$ by $\la a,b\ra$ we shall denote the subgroup of $G$ generated by the elements $a$ and $b$.  The following lemma gives conditions under which the subgroup $\la a,b\ra$ is finite.

\begin{lemma}\label{l3}  The subgroup $\la a,b\ra$ generated by elements $a,b$ of a group $G$ is finite provided that the following conditions are satisfied:
\begin{enumerate}
\item $b^2\in\la a\ra$;
\item $a^2b\in b\cdot \la a\ra$;
\item the elements $a$ and $ab$ have finite order.
\end{enumerate}
\end{lemma}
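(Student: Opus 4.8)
The plan is to exhibit inside $H:=\la a,b\ra$ a finite normal subgroup with finite quotient, so that $H$ itself is finite. Since $a$ has finite order $n$ by condition (3), the subgroups $A:=\la a\ra$ and $B:=\la a^2\ra$ are both finite, and I would first rewrite the hypotheses as explicit relations: condition (1) gives $b^2=a^k$ for some $k$ (so $b$ too has finite order), condition (2) gives $b^{-1}a^2b=a^m$ for some $m$, and $ab$ has some finite order $p$ by condition (3). The proof then splits into two tasks: show that $B$ is normal in $H$, and control the quotient $H/B$.

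I expect the normality of $B$ to be the only genuinely delicate point. Conjugation by $a$ fixes $B$, so everything reduces to showing $b^{-1}a^2b=a^m\in B$, i.e. that $m$ is even whenever $n$ is even. Here I would use that conjugation is an automorphism, so $a^m$ and $a^2$ have equal order and hence $\gcd(m,n)=\gcd(2,n)$. If $n$ is odd then $B=A$ and there is nothing to check; if $n$ is even then $\gcd(m,n)=2$ forces $m$ even, whence $a^m\in\la a^2\ra=B$. Since $B$ is finite and $b^{-1}Bb\subseteq B$, conjugation by $b$ permutes $B$ and so $b^{-1}Bb=B$; as $a$ also normalizes $B$, I conclude that $B$ is normal in $H$.

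It then remains to see that $H/B$ is finite. In the quotient $\bar a^2=1$, so $\bar a$ has order at most $2$, while $\bar b^2=\overline{a^k}\in\la\bar a\ra=\{1,\bar a\}$. If $\bar b^2=\bar a$, then $H/B=\la\bar b\ra$ is cyclic with $\bar b^4=\bar a^2=1$, hence of order at most $4$. If instead $\bar b^2=1$, then $\bar a,\bar b$ satisfy $\bar a^2=\bar b^2=1$ together with $(\bar a\bar b)^p=\overline{(ab)^p}=1$, so $H/B$ is a homomorphic image of the dihedral group $\la x,y\mid x^2,\,y^2,\,(xy)^p\ra$ of order $2p$, giving $|H/B|\le 2p$. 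In either case $H/B$ is finite, and since $B$ is finite as well, $H=\la a,b\ra$ is finite. Thus once a finite $b$-invariant subgroup of $\la a\ra$ is in hand, the finiteness of the quotient is forced by this cyclic/dihedral dichotomy and the finite order of $ab$.
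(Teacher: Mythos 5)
Your proof is correct, and it takes a genuinely different route from the paper's. The paper argues by producing an explicit normal form: writing $H=\la a,b\ra=\bigcup_{k}H_k$ with $H_k$ the set of words built from $k$ blocks $a^{n_i}b^{m_i}$, it shows by induction on $k$ that every element of $H$ lies in the finite set $\{(ab)^ia^j,\;b(ab)^ia^j: i,j\ge 0\}$, of size at most $2\cdot\mathrm{ord}(ab)\cdot\mathrm{ord}(a)$; there, conditions (1) and (2) serve only to reduce $b$-exponents to $0$ or $1$ and to push even powers of $a$ across $b$. You instead isolate the structural content of the hypotheses: condition (2) says $b^{-1}a^2b=a^m$, and since conjugation preserves order, $\gcd(m,n)=\gcd(2,n)$ for $n=\mathrm{ord}(a)$, which (with the correct parity case split) puts $a^m$ in $B=\la a^2\ra$; finiteness of $B$ upgrades $b^{-1}Bb\subseteq B$ to equality, and since $a$ trivially normalizes $B$, the subgroup $B$ is normal in $H$. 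The quotient $H/B$ is generated by $\bar a,\bar b$ with $\bar a^2=1$ and $\bar b^2\in\{1,\bar a\}$, hence is either cyclic of order at most $4$ or a homomorphic image of the dihedral group of order $2\,\mathrm{ord}(ab)$ --- this is where condition (3) enters --- so $H$ is finite as an extension of a finite group by a finite group. Both arguments give bounds of the same order of magnitude; yours is more conceptual and explains why the hypotheses are exactly what is needed (normality of $\la a^2\ra$ plus a bounded cyclic-or-dihedral quotient), while the paper's normal form is more elementary and self-contained, using nothing beyond manipulation of words in the generators.
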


\begin{proof} Since the element $a$ has finite order and $b^2\in\la a\ra$, the element $b$ has finite order too. It is clear that the subgroup $H=\la a,b\ra$ generated by the elements $a,b$ can be written as the countable union $H=\bigcup_{k\in\w}H_k$ where $H_0=\{1\}$ and 
$H_k$ is the subset of elements of the form $a^{n_1}b^{m_1}\cdots a^{n_k}b^{m_k}$ where $n_i,m_i\ge 0$ for $i\le k$.

For every $k\in\w$ consider the set 
$$\Pi_k=\{(ab)^ia^j,b(ab)^ia^j:0\le i\le k,\;j\ge 0\}$$ and observe that $\Pi_k\cdot a=\Pi_k$ and $\{1,a,ab\}\cdot \Pi_{k}\subset \Pi_{k+1}$.

\begin{claim} $H_k\subset \Pi_k$ for each $k\in\w$.
\end{claim}

This claim will be proved by induction on $k$. The inclusion $H_0=\{1\}\subset\Pi_0$ is trivial. Assume that for some number $k>0$ the inclusion $H_{k-1}\subset \Pi_{k-1}$ has been proved. 

In order to show that $H_k\subset\Pi_k$, take any element 
$x=a^{n_1}b^{m_1}a^{n_2}b^{m_2}\dots a^{n_k}b^{m_k}\in H_k$. Since $b^2\in\la a\ra$, we can assume that $m_1\in\{0,1\}$. Observe that the product $y=a^{n_2}b^{m_2}\cdots a^{n_k}b^{m_k}\in H_{k-1}$.

If $m_1=0$, then $x=a^{n_1+n_2}j^{m_2}\cdots a^{n_k}j^{m_k}\in H_{k-1}\subset\Pi_{k-1}\subset \Pi_k$.  

Next, assume that $m_1=1$. It follows from $a^2b\in b\cdot\la a\ra$ that for every $n\in\w$ we get $a^{2n}b\in b\cdot \la a\ra$. Write the number $n_1$ 
as $n_1=2n+\varepsilon$ for some $n\in\w$ and some $\varepsilon\in\{0,1\}$. Then $a^{n_1}b=a^\varepsilon a^{2n}b=a^\varepsilon ba^m$ for some $m\in\w$ and hence 
$$x=a^{2n+\varepsilon}b y=a^{\varepsilon}ba^my\in a^\varepsilon b a^m\cdot H_{k-1}=a^\e b\cdot H_{k-1}\subset a^\e b\cdot \Pi_{k-1}\subset \Pi_k.$$
This completes the proof of the claim.

Since the element $ab$ has finite order, we see that the union $\bigcup_{k\in\w}\Pi_k$ is finite and so is the subgroup $H=\bigcup_{k\in\w}\subset\bigcup_{k\in\w}\Pi_k$. 
\end{proof}

The proof of Theorem~\ref{t1} will be complete as soon as we prove that each infinite non-abelian 2-group $G$ with a unique element of order 2 is isomorphic to $Q_{2^\infty}$. Let $1$ denote the neutral element of $G$ and $-1$ denote the unique element of order 2 in $G$. It commutes with any other element of $G$. 

Now we prove a series of lemmas and in the final Lemma~\ref{l12} we shall prove that $G$ is isomorphic to $Q_{2^\infty}$.

\begin{lemma}\label{l4} The group $G$ contains an element of order 8.
\end{lemma}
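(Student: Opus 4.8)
The plan is to argue by contradiction: assume $G$ has no element of order $8$. Since in a $2$-group an element of order $2^k$ with $k\ge 3$ has a power of order exactly $8$, this assumption forces every element of $G$ to have order $1$, $2$, or $4$; as $G$ is non-abelian it is not of exponent $2$ (exponent-$2$ groups are abelian), so elements of order $4$ do exist. Each order-$4$ element squares to the unique involution $-1$, while elements of order $\le 2$ square to $1$; hence $x^2\in Z:=\{1,-1\}$ for every $x\in G$. The subgroup $Z$ is central, and in the quotient $\bar G:=G/Z$ every element satisfies $\bar x^2=\bar 1$, so $\bar G$ is an elementary abelian $2$-group, i.e. a vector space over $\mathbb{F}_2$. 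Since $G$ is infinite, so is $\bar G$, whence $\dim_{\mathbb{F}_2}\bar G\ge 3$. Moreover $\bar G$ abelian gives $[G,G]\subseteq Z$, so commutators are central and $G$ has nilpotency class $\le 2$.

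Next I would encode the remaining structure as a quadratic form. Define $q:\bar G\to Z$ by $q(\bar x)=x^2$; this is well defined because $(xz)^2=x^2$ for $z\in Z$ (as $z$ is central of order $\le 2$). Identifying $Z$ with $\mathbb{F}_2$, a short computation in a class-$2$ group gives $(xy)^2=x^2y^2[y,x]$, so that $q(\bar x+\bar y)=q(\bar x)+q(\bar y)+\beta(\bar x,\bar y)$ with $\beta(\bar x,\bar y)=[x,y]$. Since commutators are central and bi-multiplicative in a class-$2$ group, $\beta$ is an $\mathbb{F}_2$-bilinear form and $q$ is a genuine quadratic form with polarization $\beta$. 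The hypothesis that $-1$ is the \emph{unique} involution says exactly that $q$ is anisotropic: indeed $q(\bar x)=0$ means $x^2=1$, i.e. $x\in\{1,-1\}=Z$, i.e. $\bar x=\bar 0$.

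The crux is then a purely linear-algebraic fact: an anisotropic quadratic form over $\mathbb{F}_2$ cannot have dimension $\ge 3$, which contradicts $\dim_{\mathbb{F}_2}\bar G\ge 3$ and finishes the proof. To see this I would fix a nonzero $u\in\bar G$ (so $q(u)=1$) and examine $\ell:=\beta(u,\cdot)$. For every $w\notin\la u\ra$ both $w$ and $u+w$ are nonzero, whence $1=q(u+w)=q(u)+q(w)+\ell(w)=\ell(w)$; thus $\ell$ equals $1$ off the line $\la u\ra$ and $0$ on it. Because $\dim_{\mathbb{F}_2}\bar G\ge 3$, one can choose $w_1,w_2\notin\la u\ra$ with $w_1+w_2\notin\la u\ra$ (the forbidden vectors $0,u,w_1,u+w_1$ number at most four, while $|\bar G|\ge 8$); additivity of $\ell$ then forces $1=\ell(w_1+w_2)=\ell(w_1)+\ell(w_2)=0$, a contradiction. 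Equivalently, this produces a nonzero $\bar x$ with $q(\bar x)=0$, i.e. an involution other than $-1$.

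I expect the main obstacle to be setting up the quadratic form cleanly — verifying that $q$ descends to the quotient and that its polarization is the (bilinear, thanks to class $\le 2$) commutator pairing — together with the anisotropy bound over $\mathbb{F}_2$; the group-theoretic reductions beforehand are routine. As an alternative to the final linear-algebra step one could invoke Lemma~\ref{l3} to see that every $2$-generated subgroup of an exponent-$4$ group $G$ is finite, hence by Lemma~\ref{l1} a copy of $C_4$ or $Q_8$, but bounding $G$ itself still requires the anisotropy argument, so I would present the form-theoretic proof directly.
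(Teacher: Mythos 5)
Your argument is correct, and it shares only its first half with the paper's proof. Both start identically: assuming no element of order $8$ forces $x^4=1$ for all $x$, the central subgroup $Z=\{1,-1\}$ absorbs every square, and $G/Z$ is elementary abelian. From there the paper takes a group-theoretic shortcut: $G/Z$ Boolean implies $G/Z$ (and hence $G$, being finite-by-locally-finite) is locally finite, so the infinite group $G$ contains a finite subgroup of order $\ge 16$, which by Lemma~\ref{l1} must be $C_{2^n}$ or $Q_{2^n}$ with $n\ge 4$ and therefore already contains an element of order $8$ --- contradiction. You instead make the structure quantitative: the squaring map descends to an anisotropic quadratic form $q$ on the $\mathbb{F}_2$-vector space $G/Z$ with polarization the commutator pairing, and your linear functional $\ell=\beta(u,\cdot)$, which would have to equal $1$ exactly off the line $\la u\ra$, cannot be additive once $\dim\ge 3$. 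This is a complete and correct proof; it is self-contained (it never invokes Lemma~\ref{l1} or local finiteness) and in fact proves the sharper statement that any $2$-group of exponent $4$ with a unique involution has order at most $8$, essentially re-deriving the exponent-$4$ fragment of the finite classification rather than citing it. The trade-off is that it requires setting up the quadratic-form formalism, whereas the paper's route is shorter given that Lemma~\ref{l1} is already in hand and used throughout. One cosmetic remark: your opening observation that non-abelianness rules out exponent $2$ is not actually needed --- $x^2\in Z$ holds for every $x$ of order dividing $4$ regardless of whether elements of order $4$ exist, and the dimension count does the rest.
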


\begin{proof} In the opposite case $x^4=1$ for each element $x\in G$. The subgroup $Z=\{1,-1\}$ lies in the center of the group $G$ and hence is normal. Since $x^2\in Z$ for all $x\in G$, the quotient group $G/Z$ is Boolean in the sense that $y^2=1$ for all $y\in G/Z$. Being Boolean, the group $G/Z$ is abelian and locally finite (the latter means that each finite subset of $G/Z$ generates a finite subgroup). Then the group $G$ is locally finite too. Since $G$ is infinite, it contains a finite subgroup $H$ of order $|H|\ge 16$. By Lemma~\ref{l1}, $H$ is isomorphic to $C_{2^n}$ or $Q_{2^n}$ for some $n\ge 4$. In both cases $H$ contains an element of order 8, which contradicts our hypothesis.
\end{proof}

Let $F=\{x\in G:\;x^2=-1\}$ denote the set of elements of order 4 in the group $G$.

Lemmas~\ref{l1} and \ref{l2} imply:

\begin{lemma}\label{l5} $|F\cap A|\le 2$ for each abelian subgroup $A\subset G$.
\end{lemma}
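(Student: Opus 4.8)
The plan is to show that any abelian subgroup $A$ that meets $F$ must itself be cyclic (finite or quasicyclic), and then to count the elements of order $4$ in a cyclic $2$-group directly. This is why the remark preceding the statement says that Lemmas~\ref{l1} and~\ref{l2} \emph{imply} the assertion.

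First I would dispose of the trivial case: if $A\cap F=\emptyset$ there is nothing to prove, so assume $A$ contains an element $x$ of order $4$. Then $x^2$ has order $2$, and since $-1$ is the only element of order $2$ in $G$, we obtain $x^2=-1\in A$. Next I would verify that $A$ meets the hypotheses of the classification lemmas. Being a subgroup of the $2$-group $G$, the group $A$ is itself a $2$-group; moreover any element of order $2$ in $A$ is an element of order $2$ in $G$, hence equals $-1$. Therefore $A$ has exactly one element of order $2$ and consequently a unique $2$-element subgroup $\{1,-1\}$.

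Now I would apply the dichotomy. If $A$ is finite, Lemma~\ref{l1} gives $A\cong C_{2^n}$ or $A\cong Q_{2^n}$; if $A$ is infinite, Lemma~\ref{l2} gives $A\cong C_{2^\infty}$. Since $A$ is abelian while the genuine generalized quaternion groups $Q_{2^n}$ (those with $n\ge 3$) are non-abelian, the quaternion alternative is excluded. Hence $A$ is cyclic: $A\cong C_{2^n}$ for some $n\in\IN\cup\{\infty\}$.

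Finally I would count. A cyclic $2$-group $C_{2^n}$ with $n\ge 2$, including the case $n=\infty$, possesses a unique subgroup of order $4$, isomorphic to $C_4$, and this $C_4$ contains exactly two elements of order $4$; for $n\le 1$ there are none. Thus $|F\cap A|\le 2$ in every case. I do not expect a serious obstacle: the only points needing care are checking that $A$ genuinely inherits the ``unique $2$-element subgroup'' property so that Lemmas~\ref{l1} and~\ref{l2} apply, and ruling out the non-abelian quaternion groups; once $A$ is known to be cyclic, the count is immediate.
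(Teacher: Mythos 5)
Your proposal is correct and is exactly the argument the paper intends: the paper gives no written proof, merely noting that Lemmas~\ref{l1} and~\ref{l2} imply the statement, and you have filled in precisely that reasoning (reduce to $A$ containing $-1$, classify $A$ as cyclic or quasicyclic since the quaternion groups are non-abelian, and count the at most two elements of order $4$).
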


\begin{lemma}\label{l6} For each $x\in G$ and  $b\in F\setminus\la x\ra$ we get $bxb^{-1}=x^{-1}$. 
\end{lemma}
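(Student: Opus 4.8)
The plan is to argue by strong induction on $\mathrm{ord}(x)=2^k$, reducing each step to the finiteness criterion of Lemma~\ref{l3} and the classification of Lemma~\ref{l1}. The cases $x\in\{1,-1\}$ (orders $1$ and $2$) are trivial, since $-1$ is central and equal to its own inverse; they start the induction, while the substantive base case $\mathrm{ord}(x)=4$ is deferred as the main obstacle. So assume $x\ne 1$. Then $\la x\ra$ is a nontrivial finite cyclic $2$-group, hence contains the unique element of order $2$, namely $-1$, so $b^2=-1\in\la x\ra$; this is hypothesis (1) of Lemma~\ref{l3} with $a=x$. Hypothesis (3) is automatic, since $x$ and $xb$ lie in the $2$-group $G$ and therefore have finite order.

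The induction is designed to supply hypothesis (2), that $x^2b\in b\cdot\la x\ra$, i.e. $b^{-1}x^2b\in\la x\ra$. When $\mathrm{ord}(x)\ge 8$ the element $x^2$ has strictly smaller order, and $\la x^2\ra\subset\la x\ra$ gives $b\in F\setminus\la x^2\ra$; the inductive hypothesis applied to $x^2$ then yields $bx^2b^{-1}=x^{-2}$, which rearranges to $b^{-1}x^2b=x^{-2}\in\la x\ra$. With all three hypotheses verified, Lemma~\ref{l3} makes $H:=\la x,b\ra$ a finite $2$-group; its only element of order $2$ is $-1\in\la b\ra$, so Lemma~\ref{l1} gives $H\cong C_{2^n}$ or $H\cong Q_{2^n}$.

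For $x\ne 1$ the group $H$ cannot be cyclic: otherwise $\la x\ra$ and $\la b\ra$ would be comparable, and $b\notin\la x\ra$ would force $\la x\ra$ to be a proper subgroup of $\la b\ra$, whence $x\in\{1,-1\}$, a contradiction. Thus $H\cong Q_{2^n}$ is non-abelian; let $C_H$ be a cyclic subgroup of index $2$, so every element of $H\setminus C_H$ has order $4$ and inverts each element of $C_H$ under conjugation. In the inductive case $\mathrm{ord}(x)\ge 8$ the element $x$ must lie in $C_H$, and the two order-$4$ elements of $C_H$ already belong to $\la x\ra$; hence $b\in F\setminus\la x\ra$ lies in $H\setminus C_H$ and therefore inverts $x$, giving $bxb^{-1}=x^{-1}$ and closing the induction.

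The main obstacle is the base case $\mathrm{ord}(x)=4$, where $x\in F$. Here hypothesis (2) holds for free because $x^2=-1$ is central, so $H$ is again finite and isomorphic to $Q_{2^n}$; but now $x$ itself has order $4$ and need not lie in $C_H$, so the localization used above breaks down and one cannot simply read off the inversion from the quaternion structure. I expect to handle this case by exploiting the hypothesis $b\notin\la x\ra$ together with Lemma~\ref{l5}, whose bound on the number of order-$4$ elements in an abelian subgroup should constrain the relative position of $x$ and $b$ within $H$ and force the required relation. This order-$4$ case is where I anticipate the real work to lie.
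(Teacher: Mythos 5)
Your inductive step for $\mathrm{ord}(x)\ge 8$ is correct and is essentially the paper's own argument, with a more careful justification of why $b$ must fall outside the index-$2$ cyclic subgroup of $\la x,b\ra\cong Q_{2^n}$. But the proposal has a genuine gap: the base case $\mathrm{ord}(x)=4$, which you explicitly defer, is never proved, and since the passage from order $8$ to order $4$ (applying the inductive hypothesis to $x^{2}$) also runs through this case, nothing in the induction is actually established. Worse, the strategy you sketch for closing it cannot succeed, because the order-$4$ instance of the lemma is false as stated. Take $G=Q_{2^\infty}$, $x=j$, and $b=\zeta j$ where $\zeta\in C_8\subset C_{2^\infty}$ is a primitive $8$th root of unity. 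Then $b^2=\zeta\bar\zeta j^2=-1$, so $b\in F\setminus\la x\ra$, yet $bxb^{-1}=\zeta j\,\bar\zeta=\zeta^2 j=ij=k\ne -j=x^{-1}$. Here $\la x,b\ra\cong Q_{16}$ (note $x^{-1}b=\bar\zeta$ has order $8$), which is exactly the configuration you worried about: $x$ of order $4$ lying outside the cyclic subgroup of index $2$. Lemma~\ref{l5} cannot rule this out, since $Q_{16}$ itself satisfies $|F\cap A|\le 2$ for every abelian subgroup $A$.

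You have in fact put your finger on a defect in the paper itself: its proof of the $k=2$ case asserts that the only finite $2$-group with a unique involution generated by two order-$4$ elements lying in distinct cyclic subgroups is $Q_8$, which overlooks that $Q_{16}$ (and every $Q_{2^n}$) is generated by two such elements, and the example above shows the conclusion genuinely fails there. The statement is true, and is all that the later lemmas need, only for those $x$ whose cyclic group $\la x\ra$ is contained in a cyclic subgroup of order $8$; any repair must restrict the lemma accordingly and then supply a separate proof of the restricted order-$4$ case (say for $x=y^2$ with $\mathrm{ord}(y)=8$), since that sub-case is still what the inductive step at order $8$ consumes. As written, your proposal --- like the paper's proof --- does not contain that argument.
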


\begin{proof} This lemma will be proved by induction on the order $2^k$ of the element $x$. The equality $bxb^{-1}=x^{-1}$ is true if $x$ has oder $\le 2$ (in which case $x$ is equal to $1$ or $-1$). 

Next, we check that the lemma is true if $k=2$. In this case $b^2=x^2=-1\in\la x\ra$ and $xb^2=xx^2=x^2x=b^2x\in b^2\cdot\la x\ra$. By Lemma~\ref{l3}, the subgroup $\la x,b\ra$ is finite. Now we see that  $\la x,b\ra$ is a finite 2-group with a single element of order 2, $\la x,b\ra$ is generated two elements of order 4 and contains two distinct cyclic subgroups of order 4. Lemma~\ref{l1} implies that $Q_8$ is a unique group with these properties. Analyzing the structure of the quaternion group $Q_8$, we see that $bxb^{-1}=x^{-1}$ (because $b$ and $x$ generate two distinct cyclic subgroups of order 4).
 
Now assume that for some $n\ge 3$ we have proved that $bxb^{-1}=x^{-1}$ for any element $x\in G$ of order $2^k<2^n$ such that $b\in F\setminus\la x\ra$. Let $x\in G$ be an element of order $2^n$ and $b\in F\setminus\la x\ra$.
Then the element $x^{-2}$ has order $2^{n-1}\ge4$ and $b\in F\setminus\la x^{-2}\ra$. By the inductive hypothesis, $bx^{-2}b^{-1}=x^{2}$, which implies $x^2b=bx^{-2}\in b\cdot\la x\ra$.  By Lemma~\ref{l3}, the subgroup $\la x,b\ra$ is finite. Since $bx^{-2}=x^2b\ne x^{-2}b$, the subgroup $\la x,b\ra$ is not abelian and by Lemma~\ref{l1}, it is isomorphic  to $Q_{2^m}$ for some $m$. Now the properties of the group $Q_{2^m}$ imply that $bxb^{-1}=x^{-1}$. 
\end{proof}

\begin{lemma}\label{l7} For each maximal abelian subgroup $A\subset G$ of cardinality $|A|>4$ and each $b\in F\setminus A$, we get $F\setminus A=bA$.
\end{lemma}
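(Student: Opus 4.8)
The plan is to prove the two inclusions $bA\subseteq F\setminus A$ and $F\setminus A\subseteq bA$ separately, the essential tools being Lemma~\ref{l6} and the maximality of $A$. First I would extract the crucial consequence of maximality, namely that the centralizer of $A$ in $G$ is $A$ itself: if an element $g\in G$ commutes with every element of $A$, then $\la A,g\ra$ is generated by pairwise commuting elements (the elements of $A$ commute among themselves and $g$ commutes with each of them), hence $\la A,g\ra$ is abelian, so by maximality $\la A,g\ra=A$ and therefore $g\in A$. Next I would observe that, since $b\notin A$ and $\la x\ra\subseteq A$ for every $x\in A$, the element $b$ lies in $F\setminus\la x\ra$, so Lemma~\ref{l6} yields $bxb^{-1}=x^{-1}$ for all $x\in A$; in other words $b$ inverts $A$.

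For the inclusion $bA\subseteq F\setminus A$ I would take an arbitrary $a\in A$ and compute $(ba)^2$. Rewriting $ba=a^{-1}b$ and using the centrality of $-1$ gives $(ba)^2=(a^{-1}b)(ba)=a^{-1}b^2a=a^{-1}(-1)a=-1$, so $ba$ has order $4$ and thus $ba\in F$. Since $b\notin A$ while $a\in A$, the product $ba$ cannot lie in $A$, whence $ba\in F\setminus A$. This settles the easy inclusion for every $a\in A$.

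The reverse inclusion is where the argument really lives. Given any $c\in F\setminus A$, the same reasoning as for $b$ (again via Lemma~\ref{l6}, since $c\notin\la x\ra$ for $x\in A$) shows that $c$ inverts $A$. Then the element $d=b^{-1}c$ centralizes $A$: for each $x\in A$ one has $dxd^{-1}=b^{-1}(cxc^{-1})b=b^{-1}x^{-1}b=x$, where the last equality is just the inversion relation for $b$ read as $b^{-1}x^{-1}b=x$. By the first step $d\in A$, and therefore $c=bd\in bA$. Combining the two inclusions gives $F\setminus A=bA$.

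The main obstacle I anticipate is the reverse inclusion, and more precisely the clean identification $C_G(A)=A$; once that is in hand, the conceptual picture is that the set of all elements inverting $A$ is a single left coset of $A$ (any two such elements differ by a centralizing factor, and multiplying an inverting element by an element of $A$ keeps it inverting), and Lemma~\ref{l6} forces every element of $F\setminus A$ into that coset. I would verify the two rewriting identities $b^{-1}x^{-1}b=x$ and $(ba)^2=-1$ with care, as they depend on manipulating the inversion relation, but I expect these to be routine bookkeeping rather than a genuine difficulty.
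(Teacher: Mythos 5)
Your proposal is correct and follows essentially the same route as the paper: Lemma~\ref{l6} shows every element of $F\setminus A$ inverts $A$, the computation $(bx)^2=-1$ gives $bA\subseteq F\setminus A$, and the quotient $b^{-1}c$ centralizes $A$, hence lies in $A$ by maximality, giving the reverse inclusion. The only cosmetic difference is that you apply Lemma~\ref{l6} uniformly (which is fine, since it holds trivially for $x$ of order $\le 2$) where the paper splits off that case.
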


\begin{proof} By Lemmas~\ref{l1} and \ref{l2}, the group $A$ is isomorphic to $C_{2^m}$ for some $3\le m\le\infty$. Take any element $b\in F\setminus A$. To see that $bA\subset F\setminus A$, take any element $x\in A$. The inclusion $bx\in F\setminus A$ is trivial if $x$ has order $\le 2$. So we assume that $x$ has order $\ge 4$. Since $b\notin A$, we see that $b\notin\la x\ra$. By Lemma~\ref{l6}, $bxb^{-1}=x^{-1}$. Then $bxbx=bxb^{-1}b^2x=x^{-1}(-1)x=-1$, which means that $bx\in F$. Since $x\in A$ and $b\notin A$, we get $bx\in G\setminus A$. 
Thus $bA\subset F\setminus A$.

To see that $F\setminus A\subset bA$, take any element $c\in F\setminus A$. By Lemma~\ref{l6}, $cxc^{-1}=x^{-1}$ for all $x\in A$. Then for each $x\in A$, $b^{-1}cxc^{-1}b=b^{-1}x^{-1}b=x$, which means that the element $b^{-1}c$ commutes with all elements of $A$, and thus $b^{-1}c\in A$ by the maximality of $A$. Then $c=b(b^{-1}c)\in bA$.
\end{proof}

\begin{lemma}\label{l8} For each maximal abelian subgroup $A\subset G$ and each $x\in G\setminus A$ with $x^2\in A$ we get $x^2=-1$.
\end{lemma}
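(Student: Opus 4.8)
The plan is to argue by contradiction, manufacturing a finite non-abelian subgroup whose centre is too large to be consistent with Lemma~\ref{l1}.

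First I would set up the reduction. Suppose $x^2\ne -1$. Since the element $-1$ is central, the subgroup $\la A,-1\ra$ is abelian, so maximality gives $-1\in A$; as $x\notin A$ we cannot have $x^2=1$. Hence $a:=x^2$ has order $2^{n-1}\ge 4$, where $2^n$ is the order of $x$ and $n\ge 3$. By Lemmas~\ref{l1} and \ref{l2} the group $A$ is isomorphic to $C_{2^m}$ for some $m\le\infty$. I would then record two easy facts: $A\cap\la x\ra=\la a\ra$, and $A\ne\la a\ra$ — for otherwise $A=\la a\ra\subseteq\la x\ra$, so $x$ would centralise $A$ and maximality of $A$ would force $x\in A$. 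Consequently $A$ properly contains $\la a\ra\cong C_{2^{n-1}}$, so $|A|\ge 2^n$ and $A$ has a unique subgroup $W\cong C_{2^n}$, describable as $W=\{w\in A:\,w^2\in\la a\ra\}$.

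The core of the argument is to locate $w\in W$ with $wx\ne xw$, and the hard part is precisely this existence claim: Lemma~\ref{l3} will force me to draw $w$ from the restricted set $W$ rather than from all of $A$, so it is not enough merely to know that $x$ fails to centralise $A$. I would prove $W\not\subseteq C_G(x)$ by contradiction. If $x$ commuted with every element of $W$, then for a generator $w_0$ of $W$ the subgroup $\la x,w_0\ra$ would be abelian; being generated by two elements of finite order it is finite, and having $-1$ as its only element of order $2$ it is cyclic by Lemma~\ref{l1}. In a cyclic $2$-group the two elements $x$ and $w_0$, both of order $2^n$, generate the same (unique) subgroup of order $2^n$, whence $x\in\la w_0\ra=W\subseteq A$, contradicting $x\notin A$.

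Finally, fixing $w\in W$ with $wx\ne xw$, I would apply Lemma~\ref{l3} to the pair $(x,w)$: condition (1) holds because $w^2\in\la a\ra\subseteq\la x\ra$; condition (2) holds because $w$ and $x^2=a$ both lie in the abelian group $A$, so $w^{-1}x^2w=x^2\in\la x\ra$; and condition (3) is automatic, since $G$ is a $2$-group and hence every element has finite order. Thus $H:=\la x,w\ra$ is finite. It is non-abelian (as $wx\ne xw$) and has $-1$ as its unique element of order $2$, so Lemma~\ref{l1} gives $H\cong Q_{2^k}$. But $a=x^2$ commutes with both generators $x$ and $w$, hence lies in the centre of $H$; since $a$ has order $\ge 4$ while the centre of every group $Q_{2^k}$ equals $\{1,-1\}$, this is a contradiction. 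Therefore $x^2=-1$, as required.
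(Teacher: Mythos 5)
Your proof is correct, and it shares the paper's overall skeleton --- use Lemma~\ref{l3} to manufacture a finite subgroup $\la x,\cdot\ra$ and then invoke Lemma~\ref{l1} --- but the auxiliary element and the final contradiction are genuinely different. The paper exploits that $A$, being cyclic or quasicyclic and different from $\la x^2\ra$, contains an element $a$ with $a^2=x^2$ \emph{exactly}; then $\la a,x\ra$ is finite by Lemma~\ref{l3} and contains two distinct cyclic subgroups $\la a\ra\ne\la x\ra$ of the same order $2^n\ge 8$, which is impossible in $C_{2^k}$ and $Q_{2^k}$ alike, so no case distinction is needed. You instead work with the full subgroup $W\le A$ of order $2^n$ (of which the paper's $\la a\ra$ is exactly an instance), first show $W\not\subseteq C_G(x)$, and then, for a non-commuting $w\in W$, derive the contradiction from $x^2\in Z(\la x,w\ra)$ having order $\ge 4$ while $|Z(Q_{2^k})|=2$. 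Your commuting case is essentially the paper's argument restricted to the cyclic alternative of Lemma~\ref{l1}; your non-commuting case replaces the paper's ``unique large cyclic subgroup'' fact by the ``small centre'' fact about generalized quaternion groups. Both are standard structural facts of the kind the paper already uses without proof (e.g.\ in Lemma~\ref{l6}), so nothing is lost in rigour; what the paper's sharper choice $a^2=x^2$ buys is the elimination of your case split and of the separate existence argument for a non-commuting element. All the individual steps you give (the identification $W=\{w\in A: w^2\in\la a\ra\}$, the verification of conditions (1)--(3) of Lemma~\ref{l3} for the pair $(x,w)$, and the centrality of $x^2$ in $\la x,w\ra$) check out.
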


\begin{proof} Assuming that $x^2\ne -1$, we conclude that the element  $x^2$ has oder $\ge 4$.
The maximality of $A\not\ni x$ guarantees that $A\ne\la x^2\ra$. By Lemmas~\ref{l1} or \ref{l2}, $A$ is cyclic or quasicyclic, which allows us to find an element $a\in A$ with $a^2=x^2$. Observe that $x^2=a^2\in\la a\ra$ and $ax^2=x^2a\in x^2\cdot\la a\ra$. By Lemma~\ref{l3}, the subgroup $\la a,x\ra$ is finite and by Lemma~\ref{l1}, it is isomorphic to $C_{2^n}$ or $Q_{2^n}$  for some $n\in\IN$. Observe that $\la a\ra$ and $\la x\ra$ are two distinct cyclic subgroups of order $\ge 8$, which cannot happen in the groups $C_{2^n}$ and $Q_{2^n}$. This contradiction completes the proof of the equality $x^2=-1$.
\end{proof}

\begin{lemma}\label{l9} $|F|\ge 10$.
\end{lemma}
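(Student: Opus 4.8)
The plan is to single out one large maximal abelian subgroup $A$ of $G$ and then count the elements of order $4$ inside and outside $A$ separately, the outside count being controlled by Lemma~\ref{l7}.

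First I would invoke Lemma~\ref{l4} to fix an element $a\in G$ of order $8$ and enlarge $\la a\ra$ to a maximal abelian subgroup $A\subset G$; such an $A$ exists by Zorn's lemma, since the union of a chain of abelian subgroups containing $a$ is again an abelian subgroup of $G$. By Lemmas~\ref{l1} and \ref{l2} the group $A$ is cyclic or quasicyclic, and since it contains $a$ we have $|A|\ge 8>4$, so Lemma~\ref{l7} will be applicable to $A$. Being cyclic or quasicyclic of order $\ge 4$, the group $A$ has exactly two elements of order $4$, that is $|F\cap A|=2$.

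The crucial step is to exhibit an element of order $4$ lying \emph{outside} $A$, i.e. to show that $F\setminus A\ne\emptyset$. As $G$ is non-abelian while $A$ is abelian, we have $A\ne G$, so we may pick $d\in G\setminus A$. Since $G$ is a $2$-group, $d^{2^k}=1\in A$ for large $k$, so some power of $d$ lands in $A$; taking the smallest $j$ with $d^{2^j}\in A$ and putting $c=d^{2^{j-1}}$, we obtain an element $c\in G\setminus A$ with $c^2\in A$. Lemma~\ref{l8} then forces $c^2=-1$, so $c\in F\setminus A$.

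With $c\in F\setminus A$ secured, Lemma~\ref{l7} gives $F\setminus A=cA$, whence $|F\setminus A|=|A|\ge 8$. Since the sets $F\cap A$ and $F\setminus A$ are disjoint,
$$|F|=|F\cap A|+|F\setminus A|=2+|A|\ge 2+8=10,$$
and $|F|$ is infinite when $A$ is quasicyclic. The only genuinely delicate point is the production of an order-$4$ element outside $A$; once that is in hand, the bound is pure bookkeeping built on Lemmas~\ref{l7} and \ref{l8}.
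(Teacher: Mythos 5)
Your proof is correct and follows essentially the same route as the paper's: fix an element of order $8$ via Lemma~\ref{l4}, embed it in a maximal abelian subgroup $A$, produce an element of $F\setminus A$ by taking a suitable $2$-power of an element outside $A$ and applying Lemma~\ref{l8}, and then count using Lemma~\ref{l7}. The only difference is that you spell out the extraction of the order-$4$ element outside $A$ and the equality $|F\cap A|=2$ in slightly more detail than the paper does.
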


\begin{proof} By Lemma~\ref{l4}, the group $G$ contains an element $a$ of order 8. By Zorn's Lemma, the element $a$ lies in some maximal abelian subgroup $A\subset G$. Since $G$ is non-commutative, there is an element $b\in G\setminus A$. Replacing $b$ by a suitable power $b^{2^k}$, we can additionally assume that $b^2\in A$. By Lemma~\ref{l8}, $b^2=-1$ and thus 
 $b\in F\setminus A$.  By Lemma~\ref{l7}, we get $bA=F\setminus A$, which implies that $|F|=|F\cap A|+|bA|\ge 2+8=10$.
\end{proof}

\begin{lemma}\label{l10} For any $n\ge 3$ the group $G$ contains at most one cyclic subgroup of order $2^n$.
\end{lemma}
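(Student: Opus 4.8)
The plan is to argue by contradiction and by induction on $n$. Suppose $\la x\ra$ and $\la y\ra$ are two distinct cyclic subgroups of order $2^n$ with $n\ge 3$. The whole argument is organized around a single reduction: if I can show that $\la x^2\ra=\la y^2\ra$, then the subgroup $D:=\la x^2\ra=\la y^2\ra$ is normal in $H:=\la x,y\ra$, since $x$ normalizes $\la x^2\ra$ and $y$ normalizes $\la y^2\ra=D$. The quotient $H/D$ is then generated by the two involutions $xD$ and $yD$, hence is dihedral. Because $G$ is a $2$-group, the element $xy$ has finite order, so the rotation subgroup of $H/D$ is finite and $H/D$ is a finite dihedral group; together with the finiteness of $D$ this forces $H$ to be finite. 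By Lemma~\ref{l1}, $H$ is isomorphic to $C_{2^s}$ or $Q_{2^s}$, and in either group every element of order $\ge 8$ lies in the unique maximal cyclic subgroup, so there is only one cyclic subgroup of order $2^n$ — contradicting $\la x\ra\ne\la y\ra$.

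It thus remains to establish the reduction $\la x^2\ra=\la y^2\ra$. For $n\ge 4$ this is immediate from the inductive hypothesis: $\la x^2\ra$ and $\la y^2\ra$ are cyclic of order $2^{n-1}\ge 8$, so by the case $n-1$ of the lemma there is a unique such subgroup and they coincide. The base case $n=3$ is the only real obstacle, precisely because the inductive hypothesis is unavailable: there is no uniqueness for cyclic subgroups of order $4$ (indeed $|F|\ge 10$ by Lemma~\ref{l9}). Here I would argue directly. Assume $\la x^2\ra\ne\la y^2\ra$; then $y^2\notin\la x\ra$ and $x^2\notin\la y\ra$, since an order-$4$ element lying in a cyclic group of order $8$ would generate its order-$4$ subgroup. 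Applying Lemma~\ref{l6} with the order-$4$ elements $y^2$ and $x^2$ then yields the two inversion relations $y^2xy^{-2}=x^{-1}$ and $x^2yx^{-2}=y^{-1}$, both nontrivial because $x,y$ have order $8$.

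The key trick in the base case is to feed the product $xy$ into Lemma~\ref{l6}. I claim $y^2\in\la xy\ra$: otherwise Lemma~\ref{l6} gives $y^2(xy)y^{-2}=(xy)^{-1}$, and since $y^2(xy)y^{-2}=y^2xy^{-1}=(y^2xy^{-2})y=x^{-1}y$ while $(xy)^{-1}=y^{-1}x^{-1}$, we obtain $x^{-1}y=y^{-1}x^{-1}$, whence $xyx^{-1}=y^{-1}$; but then $x^2yx^{-2}=xy^{-1}x^{-1}=y$, contradicting $x^2yx^{-2}=y^{-1}$. The symmetric computation with $b=x^2$ gives $x^2\in\la xy\ra$. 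Thus the cyclic group $\la xy\ra$ contains both order-$4$ elements $x^2$ and $y^2$, and since a cyclic $2$-group has a unique subgroup of order $4$ we get $\la x^2\ra=\la y^2\ra$ — contradicting the assumption that opened the base case and completing the reduction.

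I expect the base case $n=3$ to be the main obstacle, for the reason just indicated: the induction does not bootstrap the smallest case, and one genuinely needs Lemma~\ref{l6} in its sharp form ``every order-$4$ element outside $\la z\ra$ inverts $z$'', applied to the auxiliary element $xy$. The remaining points are routine but worth verifying: that $D$ is normal in $H$ (using $D=\la x^2\ra$ for conjugation by $x$ and $D=\la y^2\ra$ for conjugation by $y$); that a dihedral group in which the product of the two generating involutions has finite order is finite; and that neither $C_{2^s}$ nor $Q_{2^s}$ contains two distinct cyclic subgroups of order $2^n$ for $n\ge 3$.
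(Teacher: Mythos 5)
Your proof is correct, but it follows a genuinely different route from the paper's. The paper argues by a counting/coset argument: it first notes that two generators of distinct cyclic subgroups of order $2^n$ cannot commute, places them in maximal abelian subgroups $A$ and $B$, and then uses $|F|\ge 10$ (Lemma~\ref{l9}) to pick an element $c\in F$ avoiding a six-element exceptional set, so that the coset identities $cA=F\setminus A$ and $cB=F\setminus B$ from Lemma~\ref{l7} force $a\in B$ and hence a contradiction. Your argument instead runs an induction on $n$: the reduction to $\la x^2\ra=\la y^2\ra$ is free for $n\ge 4$ by the inductive hypothesis, and your base case $n=3$ is settled by the nice trick of applying Lemma~\ref{l6} to the auxiliary element $xy$ with $b=y^2$ and $b=x^2$, which correctly forces $x^2,y^2\in\la xy\ra$ and hence $\la x^2\ra=\la y^2\ra$ (I checked the two conjugation computations; they are right, and the nondegeneracy uses only that $x,y$ have order $8$). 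Your finiteness step for $H=\la x,y\ra$ via the dihedral quotient $H/D$ is also valid and in fact bypasses the paper's Lemma~\ref{l3}; alternatively, once $\la x^2\ra=\la y^2\ra$ you could invoke Lemma~\ref{l3} directly with $a=x$, $b=y$, since then $y^2\in\la x\ra$ and $y^{-1}x^2y=x^2$. What each approach buys: yours depends only on Lemmas~\ref{l1} and~\ref{l6} and is more self-contained (Lemma~\ref{l9} is not actually needed, despite your parenthetical mention), at the cost of a fiddly base case; the paper's is shorter on computation but leans on the whole chain Lemmas~\ref{l4}--\ref{l9}.
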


\begin{proof} Assume that $a,b$ be two elements generating distinct cyclic subgroups of order $2^n$. First we show that these elements do not commute. Otherwise, the subgroup $\la a,b\ra$ is abelian and by Lemma~\ref{l1} is cyclic and hence contains a unique subgroup of order $2^n$. Let $A,B\subset G$ be maximal abelian subgroups  containing the elements $a,b$, respectively.

 Observe that the set 
$$D=(F\cap A)\cup(F\cap B)\cup(F\cap B)a^{-1}$$ contains at most $2\cdot 3=6$ elements. Since $|F|\ge 10$, we can find an element $c\in F\setminus D$. By Lemma~\ref{l7}, $ca\in cA=F\setminus A$ and $cb\in cB=F\setminus B$. The choice of the element $c$ guarantees that $ca\notin F\cap B$ and hence $ca\in (F\setminus A)\cap (F\setminus B)\subset F\setminus B=cB$. Then $a\in B$ and $a$ commutes with $b$, which is a contradiction.
\end{proof}

Let $A\subset G$ be a maximal abelian subgroup of cardinality $\ge 8$. Such a subgroup exists by Zorn's Lemma and Lemma~\ref{l4}.

\begin{lemma}\label{l11} $G\setminus A=F\setminus A$ and $A$ is a normal subgroup of index 2 in $G$.
\end{lemma}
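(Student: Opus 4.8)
The statement has two parts, and the plan is to reduce the second to the first. Once I know $G\setminus A=F\setminus A$, the index and normality come almost for free: by Lemma~\ref{l9} together with Lemma~\ref{l5} the set $F\setminus A$ is nonempty, so I can fix some $b\in F\setminus A$, and Lemma~\ref{l7} gives $F\setminus A=bA$. Then $G=A\cup(G\setminus A)=A\cup bA$ is a union of two disjoint cosets (disjoint because $b\notin A$), so $A$ has index $2$ and is therefore normal. Hence the whole lemma hinges on the equality $G\setminus A=F\setminus A$.

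Since $F\setminus A\subset G\setminus A$ is automatic, the real task --- and the main obstacle --- is to show that every $x\in G\setminus A$ has order exactly $4$. First I would note that $x$ has finite $2$-power order, so some power $x^{2^i}$ lands in $A$; let $j\ge 1$ be the least index with $x^{2^j}\in A$ (here $j\ge 1$ precisely because $x\notin A$). The goal is to prove $j=1$, for then $x^2\in A$ and Lemma~\ref{l8} forces $x^2=-1$, i.e. $x\in F\setminus A$, as desired.

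To rule out $j\ge 2$ I would argue by contradiction and manufacture a forbidden cyclic subgroup. Set $y=x^{2^{j-1}}$ and $z=x^{2^{j-2}}$; minimality of $j$ keeps both outside $A$, while $y^2=x^{2^j}\in A$, so Lemma~\ref{l8} gives $y^2=-1$ and thus $y$ has order $4$. Since $z^2=y$, the element $z$ has order $8$, and $\la z\ra$ is a cyclic subgroup of order $8$ different from the order-$8$ cyclic subgroup sitting inside the cyclic or quasicyclic group $A$ (different because $z\notin A$). Two distinct cyclic subgroups of order $8$ contradict Lemma~\ref{l10}, so $j=1$ and the argument closes as above. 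The delicate point is exactly this order-$8$ construction: I must descend only two steps, to $z=x^{2^{j-2}}$, so that $z$ still lies outside $A$ yet already has order $8$ --- which is precisely the configuration that the uniqueness statement of Lemma~\ref{l10} forbids.
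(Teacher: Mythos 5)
Your proposal is correct and follows essentially the same route as the paper's own proof: you locate the last power of $x$ lying outside $A$, apply Lemma~\ref{l8} to force it to have order $4$, and then use the next power down, $x^{2^{j-2}}$, to manufacture a cyclic subgroup of order $8$ not contained in $A$, contradicting Lemma~\ref{l10}. The deduction of normality and index $2$ from $G\setminus A=F\setminus A$ via Lemma~\ref{l7} likewise matches the paper.
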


\begin{proof} The inequality $G\setminus A\ne F\setminus A$ implies the existence of an element $x\in G\setminus A$ of order $2^n\ge 8$. Find a number $k<n$ such that $x^{2^{k}}\notin A$ but $x^{2^{k+1}}\in A$. By Lemma~\ref{l8}, $x^{2^{k+1}}=-1$ and thus $k=n-2\ge 1$. Then the element $z=x^{2^{k-1}}$ has order 8 and does not belong to $A$ as $z^2\notin A$.  By Lemma~\ref{l10}, $G$ contains a unique cyclic subgroup of order 8, which is a subgroup of $A$. Consequently, $z\in\la z\ra\subset A$ and this is a contradiction proving the equality $G\setminus A=F\setminus A$.

By Lemma~\ref{l7}, for any $b\in G\setminus A=F\setminus A$ we get $bA=F\setminus A=G\setminus A$, which means that $A$ has index 2 in $G$ and is normal.
\end{proof}

\begin{lemma}\label{l12} The group $G$ is isomorphic to $Q_{2^\infty}$.
\end{lemma}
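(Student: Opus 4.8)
The plan is to show that the coset decomposition $G=A\cup bA$ forces $G$ to have exactly the multiplication table of $Q_{2^\infty}$, and then to write down an explicit isomorphism. First I would pin down the structure of $A$. Since $A$ has index $2$ in the infinite group $G$ (Lemma~\ref{l11}), it is infinite, and being a subgroup of $G$ it has a unique element of order $2$, namely $-1$. Hence Lemma~\ref{l2} applies and gives an isomorphism $\alpha\colon A\to C_{2^\infty}$; note that $\alpha$ necessarily sends the unique involution $-1$ of $A$ to the unique involution $-1$ of $C_{2^\infty}$. Next I would fix any $b\in G\setminus A$. By Lemma~\ref{l11} we have $b\in F\setminus A$, so $b^2=-1$, and since $b\notin A\supseteq\la x\ra$ for every $x\in A$, Lemma~\ref{l6} yields $bxb^{-1}=x^{-1}$ for all $x\in A$ (the cases $x\in\{1,-1\}$ being trivial). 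Thus $G=A\cup Ab$ is a disjoint union, and the whole multiplication of $G$ is determined by the three relations: $A\cong C_{2^\infty}$, $b^2=-1$, and $bxb^{-1}=x^{-1}$.

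The second step is to locate the same data inside $Q_{2^\infty}$. I would use the quaternion model: $C_{2^\infty}\subset Q_{2^\infty}$ is the quasicyclic subgroup of index $2$, and I would fix the quaternion $j\in Q_8\subset Q_{2^\infty}$. Then $j^2=-1$ is the unique involution of $Q_{2^\infty}$, and the standard identity $jz=\bar z\,j$ for $z\in\IC$, together with $\bar z=z^{-1}$ for $z\in C_{2^\infty}$, gives $jzj^{-1}=z^{-1}$ for all $z\in C_{2^\infty}$. Hence $Q_{2^\infty}=C_{2^\infty}\cup C_{2^\infty}j$ carries exactly the same three relations as $G$.

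With both decompositions in hand, I would define $\phi\colon G\to Q_{2^\infty}$ by $\phi(x)=\alpha(x)$ for $x\in A$ and $\phi(xb)=\alpha(x)\,j$ for $x\in A$. Since $\alpha$ is a bijection $A\to C_{2^\infty}$ and right translation by $j$ is a bijection $C_{2^\infty}\to C_{2^\infty}j$, the map $\phi$ is a bijection. To see that $\phi$ is a homomorphism I would check the four cases coming from the partition $G=A\cup Ab$; each reduces, via $by=y^{-1}b$ in $G$ and $j\alpha(y)=\alpha(y)^{-1}j$ in $Q_{2^\infty}$ and the equalities $b^2=-1=j^2$, $\alpha(-1)=-1$, to the already-known fact that $\alpha$ is an isomorphism on $A$. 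For instance, in the hardest case one computes $\phi(xb\cdot yb)=\phi(xy^{-1}b^2)=\alpha(x)\alpha(y)^{-1}(-1)$, while $\phi(xb)\phi(yb)=\alpha(x)j\alpha(y)j=\alpha(x)\alpha(y)^{-1}j^2=\alpha(x)\alpha(y)^{-1}(-1)$, and the two agree.

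I do not expect a serious obstacle here: once the relations $b^2=-1$ and $bxb^{-1}=x^{-1}$ are available from the earlier lemmas, the argument is a routine verification that two groups built from the same conjugation data over $C_{2^\infty}$ are isomorphic. The only point requiring a little care is the bookkeeping of left versus right cosets and the consistent use of the conjugation relation when commuting $b$ (resp.\ $j$) past elements of $A$ (resp.\ $C_{2^\infty}$); getting the inverses in the right places in all four cases is where an error could creep in, so I would write the case analysis out explicitly rather than appeal to an abstract presentation.
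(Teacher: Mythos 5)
Your proposal is correct and follows essentially the same route as the paper: obtain $\varphi\colon A\to C_{2^\infty}$ from Lemma~\ref{l2}, send a fixed $b\in F\setminus A$ to an element of $Q_{2^\infty}\setminus C_{2^\infty}$, and extend over the second coset using the relations $b^2=-1$ and $bxb^{-1}=x^{-1}$ from Lemmas~\ref{l8}/\ref{l11} and \ref{l6}. The only difference is that you carry out explicitly the case-by-case verification that the paper dismisses as ``easy to check,'' and your computations there are correct.
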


\begin{proof} The subgroup $A$ is infinite (as a subgroup of finite index in the infinite group $G$). By Lemma~\ref{l2}, there is an isomorphism $\varphi:A\to C_{2^\infty}$. Given any elements $b\in F\setminus A$ and $\bar\varphi(b)\in Q_{2^\infty}\setminus C_{2^\infty}$, extend $\varphi$ to an isomorphism $\bar\varphi:G\to Q_{2^\infty}$ letting $\bar\varphi(bx)=\bar\varphi(b)\varphi(x)$ for $x\in A$. Using Lemma~\ref{l6} it is easy to check that $\bar\varphi:G\to Q_{2^\infty}$ is a well-defined isomorphism between the groups $G$ and $Q_{2^\infty}$.
\end{proof}

\section{Acknowledgments}

The anonumous referee of this paper pointed out that Theorem~\ref{t1} can be deduced from an old result of Shunkov \cite{Shun}.

\end{document}